\documentclass[11pt]{article}    
\usepackage{amsmath,amssymb,amsthm,amsfonts}

\title{Saturated fractions of two-factor designs}

\author{Roberto Fontana \\ Department of Mathematical Sciences, Politecnico di Torino
\and Fabio Rapallo \\ Department DISIT, Universit\`a del Piemonte
Orientale \and Maria Piera Rogantin \\ Department of Mathematics,
Universit\`a di Genoa }

\usepackage{fancybox}

\usepackage{multirow}

\usepackage{color}
\usepackage{graphicx}

\date{\today}
\newtheorem{theorem}{Theorem}[section]
\newtheorem{definition}{Definition}[section]
\newtheorem{proposition}{Proposition}[section]
\newtheorem{corollary}{Corollary}[section]
\newtheorem{lemma}{Lemma}[section]
\newtheorem{remark}{Remark}[section]
\newtheorem{example}{Example}[section]
\newcommand{\design}{{\mathcal D}}
\newcommand{\fraction}{{\mathcal F}}

\begin{document}

\maketitle

\begin{abstract}
In this paper we study saturated fractions of a two-factor design under the simple effect model. In particular, we define a criterion to check whether a given fraction is saturated or not, and we compute the number of saturated fractions. All proofs are constructive and can be used as actual methods to build saturated fractions. Moreover, we show how the theory of Markov bases for contingency tables can be applied to two-factor designs for moving between the designs with given margins.

\noindent
\emph{Keywords:} Estimability; Linear model; Markov moves; Complete bipartite graph.
\end{abstract}

\section{Introduction}

The search for estimable designs is one among the major problems in Design of experiments. Given a model, saturated fractions are fractions of a factorial design with as much points as the number of parameters of the model. As a consequence, all information is used to estimate the parameters, leaving no degrees of freedom to estimate the error term. Nevertheless, saturated fractions are of common use in sciences and engineering, and they become particularly useful for highly expensive experiments, or when time limitations force the choice of the minimum possible number of design points. For general references in Design of experiments, the reader can refer to \cite{raktoeetal:81}  and \cite{bailey:08}, where the issue of saturated fractions is also discussed.

In this paper we restrict our attention to two-factor designs under the simple effects model, and we address the problem of studying the fractions with the minimal number of points and characterizing the saturated ones. As we discuss in this paper, this question is relevant for the simple effect model, as a randomly chosen minimal set of design points yields a singular model matrix with very high probability when the number of levels of the factors becomes large.

Our approach is based on two main ingredients. First, we apply tools from Linear Algebra and Combinatorics to characterize the saturated fractions. Some notions, and in particular the definition of $k$-cycle that we will present later, has already been considered in the framework of contingency tables in \cite{kuhntetal:12} for the definition of robust procedures for outliers detection in contingency tables. Second, we identify a factorial design with a contingency table whose entries are the indicator function of the fraction, i.e., they are equal to $1$ for the fraction points and $0$ otherwise. This implies that a fraction can also be considered as a subset of cells of the table. Most of the techniques we present within the latter framework will be based on Algebraic statistics. The application of polynomial algebra to Design of experiments has already been presented in \cite{pistoneetal:01}, but with a different point of view. The techniques used here are mainly based on the notion of Markov bases and were originally developed for contingency tables, both to solve enumeration problems and to make non-asymptotic inference, and to describe the geometric structure of the statistical models for discrete data. A recent account of this theory can be found in \cite{drtonetal:09}.

In this paper, we benefit from the interplay between algebraic techniques for the analysis of contingency tables and some topics of Design of experiments. From this point of view, the identification of a factorial design with a binary contingency table is an essential step. Some recent results in this direction can be found in \cite{aoki|takemura:10}. The connections between experimental designs and contingency tables have also been explored in \cite{fontanaetal:12}, but limited to the investigation of enumerative problems in the special cases of contingency tables from the Sudoku problems. Here, we use a special class of Markov bases, named universal Markov bases, introduced in \cite{rapallo|rogantin:07}, and we show that the structure of these Markov bases is strictly related with the $k$-cycles.

The paper is organized as follows. In Section \ref{sat-cycles} we set some notations, we state of the problem, we define the $k$-cycles in a fraction and we characterize them in terms of orthogonal arrays. In Section \ref{main-res} we prove the main result, showing that the absence of a $k$-cycle is a necessary and sufficient condition for obtaining a saturated fraction, while in Section \ref{num-sat}, we enumerate the saturated fractions showing that their proportion over the whole number of fractions tends to zero as the number of levels increases. Section \ref{mar-bas} is devoted to the computation of the relevant Markov basis for this problem and to showing its connection with $k$-cycles. Finally, in Section \ref{fut-dir} we suggest some future research directions stimulated by the theory presented here.

\section{Saturated designs and cycles} \label{sat-cycles}

\subsection{Notations and basic definitions}

Let $\design$ be a full factorial design  with $2$ factors, $A$ and $B$,  with $I$ and $J$ levels, respectively ($I,J \geq 2$), $\design=[I] \times [J] = \{1,\dots,I\}\times \{1,\dots,J\}$. We consider a linear model on $\design$:
\begin{equation*}
Y_{i,j} = \mu_{i,j} + \varepsilon_{i,j} \ \ \mbox{ for } i \in [I], j\in [J] \, ,
\end{equation*}
where $Y_{i,j}$ are random variables with means $\mu_{i,j}$ and $\varepsilon_{i,j}$ are centered random variables that represent the error terms. In this paper we always consider the simple effect model, where the means $\mu_{i,j}$ of the response variable are written as:
\begin{equation} \label{sem}
\mu_{i,j}= \mu + \alpha_i + \beta_j \ \ \mbox{ for } i \in [I], j\in [J] \, ,
\end{equation}
where $\mu$ is the mean parameter, and
$\alpha_i$ and $\beta_j$ are the main effects of $A$ and $B$, respectively.

We denote by $p$ the number of estimable parameters. Therefore, $p=I+J-1$ in the model of Equation \eqref{sem}. Under a suitable parametrization, the matrix of
this model  is a full-rank matrix with
dimensions $IJ \times (I+J-1)$. In this paper we will use the
following \emph{model matrix}:
\begin{equation}   \label{mat-repr}
X= \left( m_0 \ | \ a_1 \ | \ \ldots \ | \ a_{I-1} \ | \ b_1 \ | \
\ldots \ | \ b_{J-1}  \right) \, ,
\end{equation}
where $m_0$ is a column vector of $1$'s, $a_1, \ldots, a_{I-1}$
are the indicator vectors of the first $(I-1)$ levels of the
factor $A$, and $b_1, \ldots, b_{J-1}$ are the indicator vectors
of the first $(J-1)$ levels of the factor $B$.
It is known that this matrix corresponds to the following reparametrized model:
\begin{multline*}
\mu_{i,j}=\tilde\mu + \tilde\alpha_i + \tilde\beta_j, \ \ \mbox{ for } i \in [I], j\in [J] \, ; \\ \tilde\mu=\mu+\alpha_I+\beta_J, \quad \tilde\alpha_i=\alpha_i-\alpha_I, \quad \tilde\beta_j=\beta_j-\beta_J \, .
\end{multline*}

A subset $\fraction$, or fraction, of a full design $\design$, with minimal cardinality $\#\fraction=p$, that
allows us to estimate the model parameters, is a \emph{main-effect saturated
design}. By definition, the model matrix $X_\fraction$ of a saturated design is
non-singular.

\begin{example} \label{first-ex}
Let us consider the case $I=3$, $J=4$ and the fraction
\begin{equation*}
{\mathcal F} = \left\{ (1,1), (1,2), (2,2), (2,3), (3,3), (3,4) \right\} \, .
\end{equation*}
The model matrix $X$ of the full design and
the model matrix $X_{\fraction}$ of the fraction are given in Figure \ref{ex_fraction}. Is this case, $\det(X_\fraction)=1$.
\begin{figure}
\begin{footnotesize}
\begin{equation*}
X=\bordermatrix{&1 &a_1 &a_2 & b_1& b_2& b_3\cr
                (1,1)&1 & 1 & 0 & 1 & 0 & 0 \cr
                (1,2)&1 & 1 & 0 & 0 & 1 & 0 \cr
                (1,3)&1 & 1 & 0 & 0 & 0 & 1 \cr
                (1,4)&1 & 1 & 0 & 0 & 0 & 0 \cr
                (2,1)&1 & 0 & 1 & 1 & 0 & 0 \cr
                (2,2)&1 & 0 & 1 & 0 & 1 & 0 \cr
                (2,3)&1 & 0 & 1 & 0 & 0 & 1 \cr
                (2,4)&1 & 0 & 1 & 0 & 0 & 0 \cr
                (3,1)&1 & 0 & 0 & 1 & 0 & 0 \cr
                (3,2)&1 & 0 & 0 & 0 & 1 & 0 \cr
                (3,3)&1 & 0 & 0 & 0 & 0 & 1 \cr
                (3,4)&1 & 0 & 0 & 0 & 0 & 0 }
 \quad
X_{\fraction}=\bordermatrix{&1 &a_1 &a_2 & b_1& b_2& b_3\cr
(1,1)&1 & 1 & 0 & 1 & 0 & 0 \cr
(1,2)&1 & 1 & 0 & 0 & 1 & 0 \cr
(2,2)&1 & 0 & 1 & 0 & 1 & 0 \cr
(2,3)&1 & 0 & 1 & 0 & 0 & 1 \cr
(3,3)&1 & 0 & 0 & 0 & 0 & 1 \cr
(3,4)&1 & 0 & 0 & 0 & 0 & 0 }
\end{equation*}
\end{footnotesize}
 \caption{The model matrix $X$ of the full factorial $3 \times 4$ design and the model matrix $X_{\mathcal F}$ of the fraction in Example \ref{first-ex}.} \label{ex_fraction}
\end{figure}
\end{example}

\subsection{$k$-cycles and orthogonal arrays}

As mentioned in the Introduction, in general, the problem of selecting saturated designs is non trivial and our case does not make exception. The key ingredient to characterize a saturated design for two-factor designs is the notion of \emph{cycle}. Here we give a definition in term of Design of experiments.

\begin{definition}\label{def:k-cycle}
A  {$k$-cycle} ($k \geq 2$) is a subset with cardinality $2k$ of a factorial design $I \times J$ with $I,J \ge k$ where each of the $k$ selected levels (among the $I$'s and $J$'s) of each factor has exactly two replications.
\end{definition}

\begin{example}
Some examples of fractions with $k$-cycles are given in Figure \ref{ex-cicli}. As described above, we identify the design points with the cells of a contingency table in order to simplify the presentation.
\begin{figure}
\begin{center}
\begin{tabular}{|c|c|c|c|} \hline
$\bullet$ & & $\bullet$ & \\ \hline
$\bullet$ & $\bullet$ & & \\ \hline
 & $\bullet$ & $\bullet$  & \\ \hline
 & &  & $\bullet$  \\ \hline
\end{tabular} \qquad \qquad
\begin{tabular}{|c|c|c|c|} \hline
$\bullet$ & & $\bullet$ & \\ \hline
 & $\bullet$ & & $\bullet$ \\ \hline
 & $\bullet$ & $\bullet$  & \\ \hline
$\bullet$ & &  & $\bullet$  \\ \hline
\end{tabular} \qquad \qquad
\begin{tabular}{|c|c|c|c|} \hline
$\bullet$ & & $\bullet$ & \\ \hline
$\bullet$ & & $\bullet$ &  \\ \hline
 & $\bullet$ & & $\bullet$  \\ \hline
 & $\bullet$ & & $\bullet$  \\ \hline
\end{tabular}
\end{center}
\caption{A $3$-cycle (left) and two $4$-cycles (middle and right).} \label{ex-cicli}
\end{figure}
The corresponding fractions are:
\begin{eqnarray*}
\fraction_{1}& = & \left\{ (1,1), (1,3), (2,1), (2,2), (3,2), (3,3),(4,3) \right\} \, , \\
\fraction_{2}& = & \left\{ (1,1), (1,3), (2,2), (2,4), (3,2), (3,3), (4,1), (4,4)  \right\} \, ,\\
\fraction_{3}& = & \left\{ (1,1), (1,3), (2,1), (2,3), (3,2), (3,4), (4,2), (4,4) \right\} \, .
\end{eqnarray*}
Notice that $\fraction_{1}$ contains a $3$-cycle, $\fraction_{2}$ and $\fraction_{3}$ contain a $4$-cycle. In $\fraction_3$ the $4$-cycle can be decomposed into two sub-cycles.
\end{example}

In a natural way, the coordinate points of the design $\design$ can be considered as the rows of a $\#\design \times d$ matrix, where $d$ is the number of factors. With a slight abuse of notation, we still call this matrix \emph{design} . The same holds for fractions. In order to analyze the role of the $k$-cycles within the framework of Design of experiments we recall here a combinatorial definition of orthogonal array, see \cite{hedayat|sloane|stufken:1999} and \cite{fcost}.

\begin{definition} \label{def:oa}
A fraction $\fraction$ of a design $I_1 \times \cdots \times I_d$ with $\#\fraction = n$ is an \emph{orthogonal array} of size $n$ and strength $t$ if, for all $t$-tuples of its factors $F_{i_1}, \ldots, F_{i_t}$, all possible combinations of levels in $[I_{i_1}] \times \cdots \times [I_{i_t}]$ appear equally often. We denote such an orthogonal array with $OA( n ; (I_1, \ldots, I_d) ; t)$.
\end{definition}

\begin{proposition} \label{pr:k_OA}
A $k$-cycle ($k \geq 2$) is:
\begin{enumerate}
\item an $OA(2k; (k,k) ;t)$ where $t=2$ if $k=2$, and $t=1$ if $k\geq 3$;
\item the union of two disjoint orthogonal arrays $OA(k; (k,k) ; 1)$.
\end{enumerate}
\end{proposition}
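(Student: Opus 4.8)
The plan is to check each statement directly from Definitions \ref{def:k-cycle} and \ref{def:oa}, using the dictionary between a fraction contained in the $k\times k$ subgrid of selected levels, a $k\times k$ binary table, and a bipartite graph on $k+k$ vertices.

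For item 1 I would first note that, by the definition of a $k$-cycle, among its $2k$ points each of the $k$ selected levels of $A$ occurs exactly twice and each of the $k$ selected levels of $B$ occurs exactly twice; read through Definition \ref{def:oa} with $t=1$, this is exactly the statement that the $k$-cycle is an $OA(2k;(k,k);1)$. Next, for $k=2$ the $k$-cycle is a subset of the $2\times 2$ grid of selected levels, i.e.\ a $2\times 2$ binary table all of whose row and column sums equal $2$; the only such table is the all-ones table, so each of the four combinations of $[2]\times[2]$ occurs exactly once and the strength is $2$. For $k\ge 3$ I would argue that the strength cannot be $2$: an $OA(2k;(k,k);2)$ would force each of the $k^2$ combinations of levels to occur $2k/k^2=2/k$ times, which is not a non-negative integer, a contradiction; hence the maximal strength is exactly $1$.

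For item 2 I would identify the $k$-cycle with the bipartite graph $G$ whose two vertex classes are the $k$ selected levels of $A$ and the $k$ selected levels of $B$, with an edge $i\sim j$ whenever $(i,j)$ belongs to the fraction. Since a fraction is a set of distinct cells, $G$ is simple, and by definition every vertex of $G$ has degree exactly $2$, so $G$ is a disjoint union of cycles, each of even length at least $4$ because $G$ is bipartite. On each component the edges split into two alternating classes, each a perfect matching of that cycle; picking one class from each component produces a perfect matching $M_1$ of $G$, and the remaining edges form a second perfect matching $M_2$, so $\{M_1,M_2\}$ partitions the $2k$ edges. Translating back, $M_1$ and $M_2$ give two disjoint sets of $k$ cells, each containing exactly one cell in every row and every column of the $k\times k$ subgrid, i.e.\ each an $OA(k;(k,k);1)$; this yields item 2 (and re-proves the strength-$1$ half of item 1, since a union of two single-factor-balanced arrays is single-factor-balanced).

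The divisibility remark and the cell/table/graph dictionary are immediate, so I do not expect them to cause trouble; the one point deserving care is the structural fact used in item 2 — that a $2$-regular bipartite graph decomposes into two perfect matchings — which I would justify via the classical observation that such a graph is a disjoint union of even cycles, together with the alternating $2$-colouring of the edges of each cycle.
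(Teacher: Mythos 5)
Your proof is correct and follows essentially the same route as the paper: your alternating two-colouring of the edges of each even cycle in the $2$-regular bipartite graph is exactly the paper's iterative ``assign alternately to $OA_1$ and $OA_2$, restarting on any remaining sub-cycle'' construction, just phrased in graph-theoretic language. You are somewhat more explicit than the paper on item 1 (the paper simply says it follows from the definition), and your divisibility remark showing the strength cannot be $2$ for $k\ge 3$ is a harmless extra.
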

\begin{proof}
\begin{enumerate}
\item This fact follows from Definition \ref{def:k-cycle}. In particular, for $k=2$ $\fraction$ coincides with the full factorial design $2^2$.

\item We construct two disjoint fractions $OA_1$ and $OA_2$,  $OA_1 \cup OA_2 =\fraction$, iteratively. Starting from a given point of the fraction, we assign alternatively to $OA_1$ and $OA_2$ the points of the fraction, choosing the first or the second factor.
\begin{itemize}
\item Choose a point of $\fraction$, say $(i_1,j_1)$, and assign it to $OA_1$.

\item Consider the unique point of $\fraction$ with the same level for the first factor, $(i_1,j_2)$, with $j_2 \ne j_1$, and assign it to $OA_2$.

\item Consider the unique point of $\fraction$ with the same level for the second factor, $(i_2,j_2)$, with $i_2 \ne i_1$, and assign it to $OA_1$.

\item Consider the unique point of $\fraction$ with the same level for the first factor, $(i_2,j_3)$ with $j_3 \ne j_2 \ne j_1$ and assign it to $OA_2$.

\item And so on, until the unique point to choose is already assigned.
\end{itemize}
If not all points of the fraction have been assigned, i.e. if the fraction contains sub-cycles, it is enough to start the procedure above on the remaining points, until all the points are assigned. In this way both $OA_1$ and $OA_2$ have, by construction, exactly one replicate for each of the $k$ levels of the two factors.
\end{enumerate}
\end{proof}

\begin{example} \label{ex:OA}
We show how the decomposition of a fraction into two orthogonal arrays works on a $4$-cycle. Let $I = J = 4$, and consider the $4$-cycle
\begin{equation*}
\fraction_2 = \left\{ (1,1), (1,3), (2,2), (2,4), (3,2), (3,3), (4,1), (4,4)\right\}
\end{equation*}
already considered in Example \ref{ex:OA} and displayed in Figure
\ref{ex-cicli}. The relevant orthogonal arrays are:
\begin{eqnarray*}
OA_{1}& = & \left\{ (1,1), (2,2), (3,3), (4,4)\right\} \, , \\
OA_{2}& = & \left\{ (1,3), (2,4), (3,2), (4,1)\right\} \, .
\end{eqnarray*}
\end{example}

To determine the number of $k$-cycles we need the notion of derangement. A \emph{derangement} is a permutation such that no element appears in its original position.

\begin{proposition}
The number of $k$-cycles is
\begin{equation*}
 \frac{k! \; !k}{2} \, ,
\end{equation*}
where $!k$ denotes the number of derangements of $k$ elements.
\end{proposition}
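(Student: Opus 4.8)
The plan is to build a correspondence between $k$-cycles and (unordered) pairs of disjoint orthogonal arrays of type $OA(k;(k,k);1)$, and then count the latter using derangements. First I would reduce to a $k\times k$ array: by Definition \ref{def:k-cycle} a $k$-cycle uses exactly $k$ of the $I$ levels of $A$ and $k$ of the $J$ levels of $B$, so after relabelling these are the levels $[k]$ of each factor, and every $k$-cycle lives inside $[k]\times[k]$ (this also explains why the claimed count does not involve $I$ or $J$). By Proposition \ref{pr:k_OA}(2) each $k$-cycle $\fraction$ splits as a disjoint union $\fraction = OA_1 \cup OA_2$ of two copies of $OA(k;(k,k);1)$ on $[k]\times[k]$. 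The key observation is that an $OA(k;(k,k);1)$ on $[k]\times[k]$ is nothing but the graph $\{(i,\sigma(i)) : i\in[k]\}$ of a permutation $\sigma\in S_k$: having $n=k$ points of strength $1$ on $k$ levels forces each row-level and each column-level to occur exactly once. Hence there are $k!$ such arrays, and a $k$-cycle corresponds to an unordered pair of permutations of $[k]$.

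Next I would count the ordered pairs $(OA_1,OA_2)$, with $OA_1=\{(i,\sigma(i))\}$ and $OA_2=\{(i,\tau(i))\}$, that are disjoint. Since each array meets every row exactly once, disjointness is equivalent to $\sigma(i)\neq\tau(i)$ for all $i\in[k]$; writing $\rho=\sigma^{-1}\tau$, this says precisely that $\rho$ has no fixed point, i.e.\ $\rho$ is a derangement. Thus there are $k!$ free choices for $\sigma$ and, independently, $!k$ choices for $\tau$, giving $k!\cdot{}!k$ ordered pairs. Because $OA_1$ and $OA_2$ are disjoint and nonempty they are never equal, so the ordered pairs fall into $2$-element swap-orbits, leaving $\frac{k!\,!k}{2}$ unordered pairs, which is the asserted number.

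The step I expect to be the real obstacle is turning ``an unordered pair of disjoint $OA(k;(k,k);1)$'s'' into an honest parametrization of $k$-cycles, i.e.\ showing that a $k$-cycle determines, up to order, the two arrays into which it decomposes. The derangement bookkeeping above is routine; what needs care is the uniqueness of the $2$-factorization, and precisely for $k$-cycles that split into sub-cycles (such as $\fraction_3$ in the example above), where the alternating-assignment construction of Proposition \ref{pr:k_OA}(2) involves binary choices on each sub-cycle and could a priori produce more than one unordered pair. I would expect the write-up to spend its effort here --- pinning down which $2$-factorizations are to be regarded as yielding the same $k$-cycle --- rather than on the counting itself.
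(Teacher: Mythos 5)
Your first two paragraphs are, in essence, the paper's entire proof: the paper simply says that $OA_1$ represents a permutation of $[k]$ and $OA_2$ a derangement of it, i.e.\ it counts ordered pairs as $k!\cdot !k$ and (implicitly) halves, exactly as you do, and it likewise works only within the $k$ selected levels of each factor. So on the counting itself you and the paper agree, and that part is correct: there are $k!\,!k/2$ unordered pairs of disjoint $OA(k;(k,k);1)$'s.

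The obstacle you singled out, however, is genuine, and the paper does not address it --- in fact it cannot be closed under Definition~\ref{def:k-cycle} as stated. A $k$-cycle in that definition is just a subset whose incidence table has every selected row and column count equal to $2$, i.e.\ a $2$-regular bipartite graph; if it splits into $c$ sub-cycles, it admits $2^{c-1}$ distinct unordered $2$-factorizations into disjoint permutation graphs (each sub-cycle can be split into its two perfect matchings independently), so the correspondence you would need is a bijection only when $c=1$. Since the definition, and the paper's own example $\fraction_3$, allow decomposable cycles, the quantity $k!\,!k/2$ counts $k$-cycles weighted by their number of $2$-factorizations, not $k$-cycles. Numerically: for $k=4$ there are $90$ zero--one $4\times 4$ tables with all line sums equal to $2$ ($72$ single circuits plus $18$ unions of two $2$-cycles), whereas $4!\cdot !4/2=108=72+2\cdot 18$; for $k=5$ the counts are $2040$ versus $2640$. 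Restricting ``$k$-cycle'' to single circuits does not rescue the formula either, since that count is $k!(k-1)!/2$, which agrees with $k!\,!k/2$ only for $k=2,3$. So the missing uniqueness step is not a defect of your write-up but of the proposition and its proof in the paper; what your argument actually establishes (and all the paper's two-line proof establishes) is the count of unordered pairs of disjoint orthogonal arrays, which coincides with the number of $k$-cycles only for $k=2,3$.
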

\begin{proof}
Let us consider $OA_1$ and $OA_2$ as in Proposition \ref{pr:k_OA}. $OA_1$ represents a permutation $\pi_1$ of $[k]$.
The fraction  $OA_2$ represents a derangement of $\pi_1([k])$.
\end{proof}

To actually compute $!k$, recall that $!k$ can be approximated by $\lfloor k!/e+0.5 \rfloor$, where $\lfloor \cdot \rfloor$ is the floor function. For more details on this theory, see for instance \cite{hassani:03}.

\section{$k$-cycles and saturated fractions} \label{main-res}

As mentioned in the previous sections, the connections between saturated designs and cycles have been explored in a slightly different framework, in the study of robust estimators in contingency tables analysis, see \cite{kuhntetal:12}. Nevertheless, we
restate here the relevant theorem within the language of Design of experiments and we give the proof, as its main algorithm will be used later in the paper.

\begin{theorem} \label{th:teo}
A fraction $\fraction$ with $p=I+J-1$ points yields a \emph{saturated
model matrix} if and only if it does not contain cycles.
\end{theorem}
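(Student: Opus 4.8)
The plan is to work with the model matrix $X_\fraction$ and to prove the contrapositive in one direction and a direct argument in the other, using the bipartite-graph picture of a fraction. To each point $(i,j)\in\fraction$ associate the edge $i\!-\!j$ in the complete bipartite graph $K_{I,J}$ (left vertices indexed by levels of $A$, right vertices by levels of $B$), so $\fraction$ corresponds to a graph $G_\fraction$ on $I+J$ vertices with $\#\fraction=I+J-1$ edges. The row of $X_\fraction$ indexed by $(i,j)$ is $m_0 + a_i + b_j$ (with the conventions $a_I=b_J=0$), i.e. essentially the signed incidence vector of the edge $i\!-\!j$ in the columns indexed by the $I+J-1$ vertices other than the ``reference'' pair. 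I would make precise that, up to the harmless column corresponding to $m_0$, $X_\fraction$ is (a submatrix of) the vertex–edge incidence matrix of $K_{I,J}$ restricted to the edges of $G_\fraction$.

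\textbf{Step 1 (cycle $\Rightarrow$ singular).} Suppose $\fraction$ contains a $k$-cycle. By Definition~\ref{def:k-cycle} this is a set of $2k$ design points in which $k$ chosen row-levels and $k$ chosen column-levels each occur exactly twice; as a subgraph of $K_{I,J}$ this is a disjoint union of even graph-cycles covering those $2k$ edges, hence a nonempty element of the cycle space of $G_\fraction$. Concretely, alternately assigning $+1$ and $-1$ to the edges around each graph-cycle (this is exactly the $OA_1/OA_2$ splitting of Proposition~\ref{pr:k_OA}) produces coefficients $c_{(i,j)}\in\{+1,-1\}$, supported on the cycle, such that for every row-level $i$ the two incident edges get opposite signs and likewise for every column-level $j$. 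Then $\sum_{(i,j)} c_{(i,j)}\,(\text{row }(i,j))$ has a zero entry in every $a_i$-column (the two $\pm1$ contributions cancel), every $b_j$-column (same), and in the $m_0$-column ($k$ edges with $+1$ and $k$ with $-1$). Hence there is a nontrivial linear dependence among the rows of $X_\fraction$, so $\det(X_\fraction)=0$ and the fraction is not saturated.

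\textbf{Step 2 (no cycle $\Rightarrow$ nonsingular).} Conversely suppose $G_\fraction$ contains no graph-cycle, i.e. it is a forest; since it has $I+J-1$ edges on $I+J$ vertices, it is a spanning tree of $K_{I,J}$. I would then show $X_\fraction$ is nonsingular by exhibiting a triangular structure: a spanning tree has a leaf, whose unique incident edge is the only row of $X_\fraction$ supported on that leaf's vertex-column; deleting that row and that column leaves the incidence data of a spanning tree of $K_{I,J}$ on one fewer vertex, and we recurse. (Equivalently: order the edges so that each successive edge meets the union of the previous ones in exactly one vertex, which is possible precisely because the graph is a connected forest; this makes $X_\fraction$ row-reducible to an identity, and one checks $\det(X_\fraction)=\pm1$.) This both proves nonsingularity and yields an explicit algorithm, which is the point the authors want to keep for later sections.

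The only real subtlety—and the step I expect to need the most care—is the translation between the combinatorial notion of $k$-cycle in Definition~\ref{def:k-cycle} and the graph-theoretic notion of ``contains a cycle'' in $G_\fraction$: one must check that a subgraph in which every involved vertex has degree exactly $2$ is the same thing, on this edge set, as containing a circuit, and that when $\#\fraction=I+J-1$, ``$G_\fraction$ is acyclic'' forces ``$G_\fraction$ is a spanning tree'' (so in particular connected). Once that dictionary is in place, both implications reduce to the two standard facts about incidence matrices of bipartite graphs used above (the cycle space gives kernel vectors; a spanning tree gives a unimodular triangular block), and the determinant is always $0$ or $\pm1$.
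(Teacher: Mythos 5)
Your Step 1 is exactly the paper's forward direction: the $OA_1/OA_2$ decomposition of Proposition \ref{pr:k_OA} with coefficients $+1$ and $-1$ gives the null combination of rows. Your Step 2, however, takes a genuinely different route from the paper. The paper proves the contrapositive (singular $\Rightarrow$ contains a cycle) by starting from an arbitrary nontrivial dependency $\sum_l \gamma_l r_{(i_l,j_l)}=0$ and chasing signs: a positive coefficient at $(i_1,j_1)$ forces negative coefficients at some $(i_1,j_2)$ and $(i_3,j_1)$, and iterating this produces a $k$-cycle. You instead argue directly that an acyclic fraction with $I+J-1$ points is a spanning tree of $K_{I,J}$ and that the corresponding (reduced) incidence matrix is unimodular by leaf-peeling. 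Both are correct. The paper's sign-chasing has the merit of extracting an explicit cycle from any singular fraction, an algorithm the authors reuse later; your argument has the merit of yielding $\det(X_\fraction)=\pm 1$ explicitly (consistent with Example \ref{first-ex}) and of making the spanning-tree picture -- which the paper only invokes informally at the end of Section \ref{mar-bas} to rederive the count $I^{J-1}J^{I-1}$ -- the engine of the proof. Two small points to nail down in your write-up: (i) the identification of $X_\fraction$ with a reduced incidence matrix requires the column operation $m_0 \mapsto m_0 - a_1 - \cdots - a_{I-1} = a_I$, after which exactly one vertex column ($b_J$) is missing and the leaf you peel must be chosen different from that reference vertex (always possible since a tree on at least two vertices has at least two leaves); (ii) the dictionary between Definition \ref{def:k-cycle} and graph cycles, which you correctly flag: a design $k$-cycle is a subgraph with all degrees equal to $2$, hence a disjoint union of graph circuits, and conversely any graph circuit in the bipartite graph has even length $2m$ and is a design $m$-cycle.
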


\begin{proof}
In view of Proposition \ref{pr:k_OA}, a cycle can be decomposed into two disjoint orthogonal arrays, $OA_1$ and $OA_2$, of $k$ points each.
\begin{description}
\item [$\Rightarrow$]
Suppose that $\fraction$ contains a cycle. When we sum the rows of the model matrix $X_{\fraction}$ with coefficient $+1$ for the points in the $OA_1$ and with coefficient $-1$ for the points in $OA_2$, we produce a null linear combination and therefore the determinant of $X_{\fraction}$ is zero.

\item [$\Leftarrow$]
Suppose that $X_{\fraction}$ is singular, i.e., there exists a null linear combination of its rows with coefficients that are not all zero. Denote by $r_{(i,j)}$ the row of the model matrix corresponding to the point
$(i,j)$ of the fraction. Therefore, we have
\begin{equation} \label{lincomb}
\gamma_1 r_{(i_1,j_1)} + \cdots + \gamma_p r_{(i_p,j_p)} = 0
\end{equation}
and the coefficients $\gamma_1, \ldots, \gamma_p$ are non all zero. Without loss of generality, suppose that $\gamma_1 >0$. As the indicator vector of the $i_1$-th level of the first factor is in the column span of $X_\fraction$, and the same holds for the $j_1$-th level of the second factor, we must have: $(a)$ a point with the same level for the first factor, say $(i_1,j_2)$, with negative coefficient in Equation \eqref{lincomb}; $(b)$ a point with the same level for the second factor, say $(i_3,j_1)$, with negative coefficient in Equation \eqref{lincomb}. Therefore, there must be a point with level $i_3$ for the first factor and a point with level $j_2$ for the second factor with positive coefficients. Now, two cases can happen: If the point $(i_3,j_2)$ is a chosen point and its coefficient in Equation \eqref{lincomb} is positive, we have a $2$-cycle; otherwise, we iterate the same argument, and we yield a $k$-cycle, with $k>2$.
\end{description}
\end{proof}

\section{The number of saturated designs}  \label{num-sat}

In this section we study the structure of the saturated fractions described in Section \ref{sat-cycles}.

\begin{definition}
Given a fraction $\fraction$, we define its margins $m_A = (m_{A,1}, \ldots ,m_{A,I})$ and $m_B=(m_{B,1}, \ldots ,m_{B,J})$ where:
\begin{eqnarray*}
m_{A,i}= \sum_{(d_1,d_2) \in \fraction} (d_1=i) \quad \text{ for } \; i \in [I] \, , \\
m_{B,j}= \sum_{(d_1,d_2) \in \fraction} (d_2=j) \quad \text{ for } \; j \in [J] \, ,
\end{eqnarray*}
where $( \cdot )$ denotes the indicator function.
\end{definition}

Notice that $m_{A,i}$ is the number of the occurrence in $\fraction$ of the $i$-th level of the factor $A$. For example, the following saturated design
\begin{equation*}
{\mathcal F} = \left\{ (1,1), (1,2), (2,1), (2,4), (3,2), (3,3), (4,4) \right\} \, .
\end{equation*}
has margins $m_A = (2,2,2,1)$ and $m_B=(2,2,1,2)$.

The following lemmas for $I \times I$ designs will be used later in the proof of the main result of this section.

\begin{lemma}\label{le:margin1}
Let $\fraction_I$ be a saturated $I \times I$ design. Its margins satisfy the following conditions:
\begin{enumerate}
\item \label{it:1} $m_{A,+}=m_{B,+}=2I-1$ where $+$ denotes the summation over the corresponding index;
\item \label{it:2} $m_{A,i} \geq 1$, $m_{B,j} \geq 1$ for all $i, j \in [I]$;
\item there exist $i, j \in [I]$ such that $m_{A,i} = m_{B,j} = 1$;
\item \label{it:4} let $i_\star \in [I]$ be an index such that $m_{A,i_\star}=1$. Let $(i_\star,j_\star)$ be the only point $(d_1,d_2)$ of $\fraction_I$ such that $d_1=i_\star$. Then $m_{B,j_\star}>1$.
\end{enumerate}
\end{lemma}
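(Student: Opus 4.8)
The plan is to establish the four statements essentially in the order listed, using the characterization of saturated fractions from Theorem~\ref{th:teo} (no cycles) together with the observation that the model matrix has $p = 2I-1$ rows. Item~\ref{it:1} is immediate: a saturated $I\times I$ design has exactly $p = I + I - 1 = 2I-1$ points, and since $m_{A,+}$ counts each point of $\fraction_I$ once (grouped by its first coordinate), $m_{A,+} = \#\fraction_I = 2I-1$, and likewise $m_{B,+} = 2I-1$.

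For item~\ref{it:2}, I would argue by contradiction: if some $m_{A,i_0} = 0$, then no point of $\fraction_I$ uses level $i_0$ of factor $A$, so the column $a_{i_0}$ of $X_{\fraction_I}$ (or, if $i_0 = I$, the corresponding linear dependence among the columns) is identically zero on the fraction, making $X_{\fraction_I}$ singular — contradicting saturation. The symmetric argument handles $m_{B,j}$. (One should take a little care with the level $I$, which is not represented by its own column; there the relevant statement is that the restriction of the model to the rows of $\fraction_I$ cannot estimate $\tilde\alpha$-contrasts involving that level, again forcing singularity.) For item~(3), combine item~\ref{it:1} and item~\ref{it:2}: the $m_{A,i}$ are $I$ positive integers summing to $2I-1 < 2I$, so by pigeonhole at least one of them equals $1$; the same counting gives some $m_{B,j} = 1$.

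Item~\ref{it:4} is the one requiring a genuine argument. Given $i_\star$ with $m_{A,i_\star} = 1$ and the unique fraction point $(i_\star, j_\star)$, I would suppose for contradiction that $m_{B,j_\star} = 1$, i.e. $(i_\star,j_\star)$ is the unique fraction point in its row and the unique one in its column. Then the row $r_{(i_\star,j_\star)}$ of $X_{\fraction_I}$ is the only row whose $a_{i_\star}$-entry is nonzero, so in any linear dependence among the rows its coefficient must vanish — and one can then delete this row and the corresponding column and pass to a smaller saturated-type system. More cleanly: deleting level $i_\star$ from factor $A$ and level $j_\star$ from factor $B$ yields a fraction $\fraction'$ of an $(I-1)\times(I-1)$ design with $2I-2 = 2(I-1)$ points, all of whose margins are still $\geq 1$. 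But $2(I-1)$ strictly exceeds $p' = 2(I-1)-1$, so $\fraction'$ has more points than parameters; I would show this forces $\fraction'$ to contain a cycle (e.g. via a counting/graph argument: a bipartite (multi)graph on $(I-1)+(I-1)$ vertices with $2(I-1)$ edges and minimum degree $\geq 1$ must contain a cycle in the graph-theoretic sense, which here is exactly a $k$-cycle of the design). Pulling that cycle back to $\fraction_I$ — it avoids level $i_\star$ and level $j_\star$, hence is unaffected by the deletion — contradicts Theorem~\ref{th:teo}.

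The main obstacle is making the reduction in item~\ref{it:4} airtight, specifically the claim that an $(I-1)\times(I-1)$ fraction with $2(I-1)$ points and no empty rows or columns must contain a design cycle. The natural route is the bipartite graph identification already implicit in the paper (design points $=$ edges of a bipartite graph between the $I$ row-levels and $J$ column-levels): a connected bipartite graph with $n$ vertices and at least $n$ edges contains a cycle, and summing over connected components, if the total edge count ($2(I-1)$) is at least the total vertex count ($2(I-1)$) then some component has a cycle; a cycle in this bipartite graph is precisely a $k$-cycle in the sense of Definition~\ref{def:k-cycle}. I would state this correspondence explicitly once and then invoke it. A secondary subtlety is the consistent handling of the "missing" columns for levels $I$ (and the deleted $j_\star$) in the model matrix, which is best sidestepped by phrasing singularity arguments in terms of estimability of effect-contrasts rather than literal zero columns.
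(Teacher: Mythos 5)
Your proof is correct. Items \ref{it:1}--\ref{it:2} and item (3) follow the same counting and null-column/column-dependence arguments as the paper, but for item \ref{it:4} you take a genuinely different route. The paper argues directly in linear algebra: placing the hypothetical doubly-isolated point at $(I,I)$, the restriction of $a_1+\cdots+a_{I-1}$ to the fraction is the indicator of ``first coordinate $\neq I$'' and that of $b_1+\cdots+b_{I-1}$ is the indicator of ``second coordinate $\neq I$''; under the contradiction hypothesis $m_{B,j_\star}=1$ these two column sums coincide, so $X_{\fraction_I}$ is singular --- one line, no appeal to Theorem~\ref{th:teo}. You instead delete the isolated point together with its row and column, obtain an $(I-1)\times(I-1)$ fraction with $2(I-1)$ points, and observe that a graph on $2(I-1)$ vertices with $2(I-1)$ edges cannot be a forest, so the bipartite graph of the residual fraction contains a graph cycle, which is exactly a $k$-cycle of the design; pulling it back contradicts Theorem~\ref{th:teo}. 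Your argument is sound (the minimum-degree hypothesis you add is not even needed for the edge-count to force a cycle, and there is no circularity since Theorem~\ref{th:teo} is proved before this lemma), and it has the merit of anticipating the spanning-tree picture the paper only sketches informally in Section~\ref{mar-bas}; the paper's version is shorter and stays entirely within the model-matrix formalism. Your explicit statement of the correspondence between graph cycles of length $2k$ and $k$-cycles in the sense of Definition~\ref{def:k-cycle} is a useful addition that the paper leaves implicit.
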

\begin{proof}
\begin{enumerate}
\item It is immediate to see that $m_{A,+}=\sum_{i=1}^I \sum_{(d_1,d_2) \in \fraction} (d_1=i) = \#\fraction_I = 2I - 1$ and the same holds for $B$.

\item Refer to the matrix representation in Equation \ref{mat-repr}. By absurd, suppose that there exists an index $i$ such that $m_{A,i}=0$. We distinguish two cases: if $i \leq I-1$ then $X_{\fraction_I}$ has a null column, corresponding to $a_i$; if $i=I$, the sum of the columns $a_1,\ldots, a_{I-1}$ is equal to $m_0$. In both cases, $X_{\fraction_I}$ is singular. The same applies to $B$.

\item This point follow immediately from items \ref{it:1} and \ref{it:2}.

\item For sake of readability, we suppose that $i_\star=I$ and $j_\star=I$. Thus, the sum of $a_1, \ldots, a_{I-1}$ is equal to the sum of $b_1, \ldots, b_{I-1}$. Hence, $X_{\fraction_I}$ is singular.
\end{enumerate}
\end{proof}

\begin{remark}
The four conditions in Lemma \ref{le:margin1} are not sufficient for characterizing the saturated fractions. A simple counterexample is the following fraction of a $5 \times 5$ design:
\begin{equation*}
\{(1,1), (1,2), (2,1), (2,3), (3,2), (3,3), (4,4), (4,5), (5,4)   \} \, ,
\end{equation*}
that is not saturated, as it contains a $3$-cycle.
\end{remark}

In the following result we analyze how the saturation property is preserved when we add one level to each of the two factors, moving from an $I \times I$ design to an $(I+1) \times (I+1)$ design.

\begin{lemma}\label{le:margin2}
Let $\fraction_I$ be a saturated $I \times I$ design, and define an $(I +1) \times (I+1)$ design containing $\fraction_I$ as:
\begin{equation*}
\fraction_{I+1} = \fraction_I \cup E_{I+1} \ \mbox{ with } \ \fraction_I \cap E_{I+1} = \emptyset \, .
\end{equation*}
Then $\fraction_{I+1}$ is saturated if and only if $E_{I+1}$ has exactly two points, chosen in the union of the $(I+1)$-th row with the $(I+1)$-th column of $\fraction_{I+1}$, with the conditions $m_{A,I+1} \geq 1$ and $m_{B,I+1} \geq 1$.
\end{lemma}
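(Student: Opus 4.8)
The plan is to reduce both implications, through Theorem~\ref{th:teo}, to a count of points together with the absence of cycles. I would start with the cardinality: $\fraction_I$ has $2I-1$ points and a saturated $(I+1)\times(I+1)$ design has $2(I+1)-1=2I+1$ points, so $\#E_{I+1}=2$ is forced in either direction. I would also record the one fact that drives the argument: the walk built in the second half of the proof of Theorem~\ref{th:teo} never refers to the number of points of the fraction, so it in fact shows that \emph{any} set of design points whose model-matrix rows satisfy a non-trivial linear relation contains a cycle.

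For the ``only if'' part, assuming $\fraction_{I+1}$ saturated, the inequalities $m_{A,I+1}\ge1$ and $m_{B,I+1}\ge1$ are immediate from item~\ref{it:2} of Lemma~\ref{le:margin1} applied to $\fraction_{I+1}$. To show that each point of $E_{I+1}$ meets row $I+1$ or column $I+1$, I would argue by contradiction: if some $q=(i_0,j_0)\in E_{I+1}$ had $i_0,j_0\le I$, then $q$ is a point of the $I\times I$ design outside $\fraction_I$, so non-singularity of $X_{\fraction_I}$ expresses the model-matrix row of $q$ as a linear combination of the rows of $X_{\fraction_I}$; this is a non-trivial linear relation among the rows indexed by $\{q\}\cup\fraction_I\subseteq\fraction_{I+1}$, so by the recorded fact $\fraction_{I+1}$ would contain a cycle, contradicting Theorem~\ref{th:teo}.

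For the ``if'' part, assume $E_{I+1}=\{q_1,q_2\}$ lies in the union of row $I+1$ and column $I+1$ and satisfies the two margin conditions; then $\#\fraction_{I+1}=2I+1$ and by Theorem~\ref{th:teo} it suffices to show $\fraction_{I+1}$ has no cycle. Suppose $C\subseteq\fraction_{I+1}$ is a cycle. If $C$ avoids both row $I+1$ and column $I+1$, then all its points have both levels $\le I$, hence lie in $\fraction_I$ (since $q_1,q_2$ are in the ``cross''), contradicting that $\fraction_I$ is saturated. If $C$ uses row $I+1$, then by Definition~\ref{def:k-cycle} it contains exactly two points of $\fraction_{I+1}$ in that row; since $\fraction_I$ supplies none, both $q_1,q_2$ lie in row $I+1$, and then $m_{B,I+1}\ge1$ forces one of them to be the corner $(I+1,I+1)$; but then $C$ uses column $I+1$ as well and should contain two points of $\fraction_{I+1}$ there, while the corner is the only one — a contradiction. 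The case where $C$ uses column $I+1$ is symmetric.

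The step I expect to need the most care is applying the cycle-detection argument of Theorem~\ref{th:teo} to $\{q\}\cup\fraction_I$, which is not itself a minimal fraction; everything else is the bookkeeping of where two extra points can be placed in the new row and column, which the margin constraints reduce to exactly the configurations allowed by the statement.
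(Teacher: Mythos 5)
Your proof is correct, and it shares the paper's skeleton (the cardinality count forcing $\#E_{I+1}=2$, Theorem~\ref{th:teo} reducing saturation to absence of cycles), but the two key steps are handled differently. For the necessity of placing the new points in the $(I+1)$-th row or column, the paper argues purely through the margin conditions of Lemma~\ref{le:margin1} (item~\ref{it:2} when an interior point kills a new margin, item~\ref{it:4} in the sub-case where the second point is the corner $(I+1,I+1)$), whereas you argue through linear algebra: an interior point $q$ has its row in the span of the rows of the non-singular matrix $X_{\fraction_I}$, and the cycle-detection walk in the proof of Theorem~\ref{th:teo} — which, as you correctly observe, uses only the existence of a non-trivial null combination and never the cardinality of the point set — then produces a cycle inside $\{q\}\cup\fraction_I\subseteq\fraction_{I+1}$. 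This is legitimate (a cycle is a purely combinatorial object, so a cycle found in the $I\times I$ design is a cycle of $\fraction_{I+1}$), but note the small check you implicitly rely on: the relation is taken with respect to the $I\times I$ model matrix, which is fine since the detection argument only needs the level-indicator vectors to lie in the column span. Conversely, in the sufficiency direction your case analysis is more complete than the paper's: the paper only remarks that one of $m_{A,I+1}$, $m_{B,I+1}$ equals $1$ ``and therefore no cycles can appear'', while you explicitly dispose of the cycle avoiding both the new row and column (it would be a cycle in the saturated $\fraction_I$) and of the cycle through the corner (which would need two points in a line containing only one). So your route buys a self-contained argument driven almost entirely by Theorem~\ref{th:teo}, at the cost of the row-space lifting observation, while the paper's buys brevity by leaning on Lemma~\ref{le:margin1}; both are valid.
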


\begin{proof}
$E_{I+1}$ must contain exactly two design points, as $\#\fraction_I=2I-1$ and $\#\fraction_{I+1}=2I+1$. If one or two points are not in the union of the $(I+1)$-th row with the $(I+1)$-th column of $\fraction_{I+1}$, there is a contradiction with Lemma \ref{le:margin1}. If both points are chosen in the $(I+1)$-th row and in the first $I$ columns, the margin $m_{B,I+1}=0$, a contradiction. With an analogous proof, the two points can not be chosen in the $(I+1)$-th column and in the first $I$ rows. All the remaining cases are valid choices, as one among $m_{A,I+1}$ and $m_{B,I+1}$ is equal to $1$ and therefore no cycles can appear.
\end{proof}

We are now ready to approach the problem of computing the number of saturated fractions.

\begin{proposition} \label{pr:np}
Given $m_A=(m_{A,1},\ldots,m_{A,I})$ and $m_B=(m_{B,1},\ldots, m_{B,J})$ with $m_{A,+}=m_{B,+}=I+J-1$, $m_{A,i} \geq 1, i \in [I]$ and $m_{B,j} \geq 1, j \in [J]$, the number of saturated designs with margins equal to $m_A$ and $m_B$ is
\begin{equation} \label{for-multinom}
\left (
\begin{array}[h]{c}
 I-1 \\
 m_{B,1}-1, \ldots, m_{B,J}-1
\end{array}
\right )
\left (
\begin{array}[h]{c}
 J-1 \\
 m_{A,1}-1, \ldots, m_{A,I}-1
\end{array}
\right ) \, .
\end{equation}
\end{proposition}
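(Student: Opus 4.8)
The plan is to exhibit an explicit bijection between saturated designs with the prescribed margins and pairs of certain combinatorial objects counted by the two multinomial coefficients. The starting observation is that a saturated design, by Theorem~\ref{th:teo}, is exactly a cycle-free fraction with $p=I+J-1$ points. Identifying the fraction with its bipartite graph $G$ on vertex classes $[I]$ (rows) and $[J]$ (columns), with an edge $(i,j)$ for each design point, the conditions $m_{A,i}\ge 1$, $m_{B,j}\ge 1$ say $G$ has no isolated vertex, the condition $m_{A,+}=m_{B,+}=I+J-1$ says $G$ has $I+J-1$ edges on $I+J$ vertices, and cycle-freeness says $G$ is a forest. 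A forest on $I+J$ vertices with $I+J-1$ edges is a spanning tree. So the saturated designs with margins $m_A,m_B$ are precisely the spanning trees of $K_{I,J}$ whose vertex degrees are $m_{A,1},\ldots,m_{A,I}$ on one side and $m_{B,1},\ldots,m_{B,J}$ on the other.

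The main step is therefore a degree-constrained tree-counting result: the number of labeled spanning trees of $K_{I,J}$ with prescribed degree sequence $(m_{A,i})$ on the $I$-side and $(m_{B,j})$ on the $J$-side equals the product in \eqref{for-multinom}. I would prove this directly rather than invoke a reference, since the paper asks for constructive arguments. The cleanest route is a Pr\"ufer-type encoding adapted to bipartite trees: a spanning tree of $K_{I,J}$ corresponds bijectively to a pair of sequences, one of length $J-1$ over the alphabet $[I]$ and one of length $I-1$ over the alphabet $[J]$, in such a way that a row vertex $i$ appears exactly $m_{A,i}-1$ times in the first sequence and a column vertex $j$ appears exactly $m_{B,j}-1$ times in the second. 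Counting such constrained sequences gives exactly $\binom{J-1}{m_{A,1}-1,\ldots,m_{A,I}-1}$ choices for the first and $\binom{I-1}{m_{B,1}-1,\ldots,m_{B,J}-1}$ for the second, whence the product. Alternatively, one can run an induction on $I+J$ using Lemma~\ref{le:margin2}: removing a degree-one vertex (which exists by item~3 of Lemma~\ref{le:margin1}) reduces to a smaller bipartite tree, and a Pascal-type identity on the multinomial coefficients closes the induction; but the bijective proof is more transparent and matches the constructive spirit of the paper.

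The step I expect to be the real obstacle is setting up the bipartite Pr\"ufer bijection carefully enough that the degree bookkeeping is manifestly correct. In the classical Pr\"ufer code for $K_n$, a vertex of degree $d$ appears $d-1$ times in a single sequence of length $n-2$; in the bipartite case the sequence splits according to which side the repeatedly-removed leaf lies on, and one has to check that the two resulting subsequences have the stated lengths $J-1$ and $I-1$ and the stated symbol multiplicities. This amounts to verifying that, as leaves are stripped one at a time, exactly $J-1$ of the removals are column-leaves (recorded by their unique row-neighbor) and exactly $I-1$ are row-leaves, together with a termination/edge-case analysis for the last two vertices. Once the bijection is pinned down, the count is the routine multinomial enumeration of words with prescribed letter frequencies, and the proposition follows.
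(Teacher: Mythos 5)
Your argument is correct in outline but follows a genuinely different route from the paper. The paper proves Proposition~\ref{pr:np} by a recursive peeling construction: after ordering the margins so that $m_{A,I}=m_{B,I}=1$ (treating the square case first and the rectangular case by isolating the all-ones tail of the $B$-margin), it places the two points of the last row and last column subject to the constraint of Lemma~\ref{le:margin1}, item~4, reduces to an $(I-1)\times(I-1)$ design, and reads the multinomial coefficients off the sequence of choices, with Lemma~\ref{le:margin2} guaranteeing that running the construction backwards always produces a saturated fraction. You instead reduce the statement to counting spanning trees of $K_{I,J}$ with prescribed degree sequences and invoke a bipartite Pr\"ufer encoding. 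Your first reduction is sound and is in fact implicitly endorsed by the paper, which uses the unconstrained version of it (via Cayley's formula for $K_{I,J}$) at the end of Section~\ref{mar-bas} as an alternative proof of Theorem~\ref{th:n_SD}; your degree bookkeeping is also consistent, since $\sum_i(m_{A,i}-1)=J-1$ and $\sum_j(m_{B,j}-1)=I-1$, matching the lengths of the two code sequences and hence the two multinomial coefficients in \eqref{for-multinom}. Your approach buys uniformity (no case split on $I$ versus $J$, no WLOG reordering of margins) and connects the proposition cleanly to classical graph enumeration; the paper's approach buys an explicit sequential construction of the fractions, which it reuses to motivate Lemma~\ref{le:margin2} and the algorithmic viewpoint. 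The one piece you must still supply is the bipartite Pr\"ufer bijection itself, which you correctly flag as the crux but leave unverified: you need to fix the leaf-removal convention, check that exactly $J-1$ removals record row-neighbours and $I-1$ record column-neighbours, that each vertex of degree $d$ is recorded exactly $d-1$ times, and that decoding is well defined. This is a standard (and true) result, but as written your proposal asserts it rather than proves it, so the proof is incomplete until that bijection is written out or a precise reference is substituted.
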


\begin{proof}
First we consider $J=I$. Without loss of generality, we can assume that the margins of the fraction are arranged in the form:
\begin{eqnarray*}
m_{A,1} \geq m_{A,2} \geq \cdots \geq m_{A,I} = 1 \, , \\
m_{B,1} \geq m_{B,2} \geq \cdots \geq m_{B,I} = 1 \, .
\end{eqnarray*}
Since $m_{A,I}=1$, we can choose a point for the last row, but we have to exclude all design points $(I,h)$ with $m_{B,h}=1$, in order to satisfy the condition in Lemma \ref{le:margin1}, item \ref{it:4}. In the same way, we choose a point in the last column.

We repeat the same argument on the $(I-1) \times (I-1)$ design obtained by deletion of the last row and of the last column. It is immediate to see that both margins of such design have a component equal to $1$. Hence, we iterate $(I-2)$ times the procedure above, until we have a degenerate $1 \times 1$ design with $1$ as its unique element.

If we analyze this algorithm backward, we note that at each step we add two points according to the rule in Lemma \ref{le:margin2}, and therefore the constructed fraction is saturated. Thus, the procedure generates
\begin{equation*}
\left (
\begin{array}[h]{c}
 I-1 \\
 m_{B,1}-1, \ldots, m_{B,I}-1
\end{array}
\right )
\left (
\begin{array}[h]{c}
 I-1 \\
 m_{A,1}-1, \ldots, m_{A,I}-1
\end{array}
\right ) \, ,
\end{equation*}
as each row can be chosen until the margin decreases to $1$, and the same holds for columns.

Now, consider $J>I$. The $B$-margin can be arranged in the form:
\begin{equation*}
m_{B,1} \geq m_{B,2} \geq \cdots \geq m_{B,I} = 1 = m_{B,I+1}=1 = \cdots m_{B,J}=1 \, .
\end{equation*}
With this ordering of the margins, the square $I \times I$ table on the left can be analyzed as in the previous case, while for the last $J-I$ columns there is only the constraint given by the $A$-margin, because the $B$-margin is always equal to $1$, and therefore no $k$-cycles can appear. Hence, the formula in  \eqref{for-multinom} is proved.
\end{proof}

\begin{theorem}\label{th:n_SD}
The number of saturated $I \times J$ designs is $I^{(J-1)}J^{(I-1)}$.
\end{theorem}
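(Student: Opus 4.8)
The plan is to obtain the count by summing, over all admissible pairs of margins, the number of saturated designs with those margins supplied by Proposition~\ref{pr:np}. By Lemma~\ref{le:margin1}, items~\ref{it:1} and~\ref{it:2} (together with the obvious analogues when $I\neq J$), every saturated $I\times J$ design has margins $m_A,m_B$ with $m_{A,+}=m_{B,+}=I+J-1$, $m_{A,i}\ge 1$ for all $i\in[I]$ and $m_{B,j}\ge 1$ for all $j\in[J]$; conversely each such pair is realised, since the count in Proposition~\ref{pr:np} is a product of positive multinomial coefficients and hence at least $1$. As the set of saturated designs is partitioned according to the pair of its margins, the total number of saturated $I\times J$ designs equals
$$
\sum_{m_A}\sum_{m_B}\binom{I-1}{m_{B,1}-1,\ldots,m_{B,J}-1}\binom{J-1}{m_{A,1}-1,\ldots,m_{A,I}-1},
$$
where $m_A$ ranges over the integer vectors with $m_{A,i}\ge1$ and $m_{A,+}=I+J-1$, and similarly for $m_B$.

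The second step is purely a computation with the multinomial theorem. The summand factors into a part depending only on $m_B$ and a part depending only on $m_A$, and the two index sets are independent, so the double sum splits as
$$
\left(\sum_{m_B}\binom{I-1}{m_{B,1}-1,\ldots,m_{B,J}-1}\right)\left(\sum_{m_A}\binom{J-1}{m_{A,1}-1,\ldots,m_{A,I}-1}\right).
$$
In the first factor, the substitution $n_j=m_{B,j}-1\ge0$ turns the constraint into $n_1+\cdots+n_J=I-1$, and the multinomial theorem with all variables set to $1$ gives $\sum\binom{I-1}{n_1,\ldots,n_J}=J^{I-1}$; symmetrically the second factor equals $I^{J-1}$. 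Multiplying the two factors yields $I^{J-1}J^{I-1}$, i.e. the asserted $I^{(J-1)}J^{(I-1)}$.

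It is worth recording the combinatorial picture behind this. Identifying a fraction with the subgraph of the complete bipartite graph $K_{I,J}$ whose edges are its cells, Theorem~\ref{th:teo} says that an $(I+J-1)$-cell fraction is saturated exactly when this subgraph is acyclic, that is, a spanning tree of $K_{I,J}$; the quantity $I^{J-1}J^{I-1}$ is the classical number of spanning trees of $K_{I,J}$, which gives an independent confirmation of the result and could alternatively serve as the backbone of the proof.

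The step requiring the most care is the first one: one must be sure that Proposition~\ref{pr:np} together with Lemma~\ref{le:margin1} genuinely exhibits a partition of the set of saturated designs indexed by admissible margin pairs --- in particular that the proposition counts \emph{all} saturated designs with a prescribed $(m_A,m_B)$ and nothing else, and that the reordering of levels used inside the proof of Proposition~\ref{pr:np} does not affect the count, which holds because the multinomial coefficients are symmetric in their lower entries. The case $J>I$ of that proposition, where the square subtable is handled separately from the remaining columns, is where I would recheck the bookkeeping most carefully; once the partition is in place, the remaining algebra is routine.
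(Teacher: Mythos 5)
Your proof is correct and follows essentially the same route as the paper: sum the count of Proposition~\ref{pr:np} over all admissible margin pairs (which, by the rectangular analogue of Lemma~\ref{le:margin1}, covers every saturated design) and evaluate the two independent sums by the multinomial theorem to get $I^{J-1}J^{I-1}$; you merely spell out the bookkeeping that the paper leaves implicit. The spanning-tree/Cayley observation you add is also the alternative argument the paper itself sketches at the end of Section~\ref{mar-bas}.
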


\begin{proof}
It follows from Proposition \ref{pr:np} and the classical multinomial theorem, by summation of all possible terms (see Equation \ref{for-multinom}) corresponding to all possible margins.
\end{proof}

We notice that the key consequence of the above enumerations is that the proportion of singular design matrices is not negligible, and it becomes as large as $I$ and $J$ increase.

\begin{corollary}
Let us randomly choose $\fraction \subset I \times J$ with $\#\fraction=I+J-1$. The probability that $\fraction$ is a saturated $I \times J$ design is
\begin{equation*}
\frac{I^{(J-1)}J^{(I-1)}}{\binom{IJ}{I+J-1}}
\end{equation*}
and it tends to $0$ as $I$ and $J$ goes to infinity.
\end{corollary}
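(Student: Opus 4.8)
\medskip
\noindent\emph{Proof plan.}
The probability formula is immediate and I would dispose of it first: we choose $\fraction$ uniformly among the $\binom{IJ}{I+J-1}$ subsets of $[I]\times[J]$ of cardinality $I+J-1$, and by Theorem~\ref{th:n_SD} exactly $I^{(J-1)}J^{(I-1)}$ of them are saturated, so the probability equals the stated ratio. The substance is the claim that this ratio tends to $0$. For that I would take logarithms and estimate the denominator by Stirling's formula: writing $\binom{IJ}{I+J-1}=(IJ)!\big/\big((I+J-1)!\,((I-1)(J-1))!\big)$ — using $IJ-(I+J-1)=(I-1)(J-1)$ — and inserting $\ln n!=n\ln n-n+O(\ln n)$, the terms linear in $IJ$ cancel because $IJ=(I+J-1)+(I-1)(J-1)$, and after collecting the $\ln I,\ln(I-1)$ terms and the $\ln J,\ln(J-1)$ terms separately one is left with
\begin{equation*}
\ln\binom{IJ}{I+J-1}-\ln\!\big(I^{(J-1)}J^{(I-1)}\big)=(I-1)(J-1)\ln\frac{IJ}{(I-1)(J-1)}+\Big(I\ln I+J\ln J-(I+J-1)\ln(I+J-1)\Big)+O\!\big(\ln(IJ)\big).
\end{equation*}
It therefore suffices to prove the right-hand side tends to $+\infty$; by symmetry I may assume $I\le J$.

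The first summand is nonnegative, and from $\ln(1+x)\ge x-\tfrac12x^2$ it obeys $(I-1)(J-1)\ln\frac{IJ}{(I-1)(J-1)}\ge(I+J-2)-\tfrac{J-1}{2(I-1)}-\tfrac{I-1}{2(J-1)}$, so it is of order $I+J$. The parenthesised quantity is always negative; it is bounded below by $-\big[(I+J)\ln(I+J)-I\ln I-J\ln J\big]=-(I+J)\,H\!\big(\tfrac{I}{I+J}\big)$, where $H$ is the natural-logarithm binary entropy, hence by $-(I+J)\ln2$. In the \emph{balanced} regime $\sqrt J\le I\le J$ the two subtracted fractions are $O(\sqrt J)$, so the right-hand side is $\ge(I+J)(1-\ln2)-O(\sqrt J)\to+\infty$, since $1-\ln2>0$.

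The step I expect to be the main obstacle is the \emph{strongly unbalanced} regime $I<\sqrt J$ (in particular $I$ bounded, $J\to\infty$): here $1-\ln2$ no longer dominates the coefficient of $J$ in $\tfrac{J-1}{2(I-1)}$, and the parenthesised term, while small (of order $-\ln J$ for bounded $I$), is genuinely negative, so the crude entropy bound fails. I would instead use the sharper inequality $H(p)\le p\ln(e/p)$ for $p\le\tfrac12$ — equivalently, keep the $e^{n}$ in $n!\le e\sqrt n\,(n/e)^n$ when bounding the smaller factorial — which for $I\le\sqrt J$ yields $(I+J-1)\ln(I+J-1)-I\ln I-J\ln J\le I+I\ln\tfrac{2J}{I}=O(\sqrt J\ln J)$, whereas the first summand is then $\ge(I+J-2)-\tfrac{J-1}{2}-\tfrac12\ge\tfrac J2$. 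So the right-hand side is $\ge\tfrac J2-O(\sqrt J\ln J)\to+\infty$ in this case too. In all regimes the surviving positive quantity is at least of order $\max(I,J)$ and so overwhelms the $O(\ln(IJ))$ Stirling error; hence $\ln\binom{IJ}{I+J-1}-\ln(I^{(J-1)}J^{(I-1)})\to+\infty$, i.e.\ the ratio — and a fortiori the probability — tends to $0$.
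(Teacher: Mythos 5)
Your proposal is correct, and it in fact supplies more than the paper does: the paper states this corollary without any proof, treating the ratio as an immediate consequence of Theorem~\ref{th:n_SD} (which it is) and supporting the claim that it tends to $0$ only by the numerical illustrations for $I=J=3,\dots,6$. Your first step — dividing the count $I^{(J-1)}J^{(I-1)}$ by $\binom{IJ}{I+J-1}$ — is exactly the paper's implicit argument for the formula. The asymptotic part is your own, and I checked it: the algebraic identity after Stirling is right (the coefficients of $\ln I$, $\ln J$, $\ln(I-1)$, $\ln(J-1)$ all match, using $IJ=(I+J-1)+(I-1)(J-1)$), the lower bound $(I+J-2)-\tfrac{J-1}{2(I-1)}-\tfrac{I-1}{2(J-1)}$ on the first summand is valid for $I,J\ge 2$, and the entropy bound $-(I+J)\ln 2$ together with $1-\ln 2>0$ settles the balanced regime. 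Your identification of the unbalanced regime $I<\sqrt{J}$ as the delicate case is apt — there the crude $\ln 2$ bound is indeed useless — and the sharper bound $H(p)\le p\ln(e/p)$ giving $O(\sqrt{J}\ln J)$ against a first summand of order $J/2$ closes it. One small remark: your argument actually proves the stronger statement that the probability tends to $0$ whenever $\max(I,J)\to\infty$ with $\min(I,J)\ge 2$, not merely when both indices diverge; this is a bonus, not a defect. The only cosmetic caveat is that the claim ``the parenthesised quantity is always negative'' is not needed anywhere (only its lower bound is used), so you could drop it rather than justify it.
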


For instance, let us consider $I=J$. For $I=3$ we obtain a saturated design in $64\%$ of cases, for $I=4$ in $36\%$ of cases, for $I=5$ in $19\%$ of cases, while for $I=6$ in $10\%$ of cases. Hence, the characterization of non-singular designs, as given in Theorem \ref{th:teo}, is useful from an algorithmic point of view, because the random choice of a subset with $I+J-1$ points does is not an efficient procedure.

\begin{example}\label{ex:I=4}

We discuss here the case $I=J=4$ extensively. The number of saturated designs is $4^6=4096$, corresponding to $36\%$ of designs with $7$ points. The possible configurations of margins, up to the permutation of the levels, are: $(4,1,1,1)$, $(3,2,1,1)$ and $(2,2,2,1)$. The  table below shows the number of saturated design with such margins for one factor, see Proposition \ref{pr:np}, the number of multiset permutations of such configurations of margins and the product of them.
\renewcommand{\arraystretch}{1.5}
\begin{center}
\begin{tabular}{|c|c|c|c|c|} \hline
Margin & $\#$ from Prop. \ref{pr:np} & Multiset permutations &Total \\ \hline
(4,1,1,1) & $\binom{3}{3, 0 , 0 , 0}$ &$\binom{4}{1 , 3}$&$4$ \\ \hline
(3,2,1,1) & $\binom{3}{2 , 1 , 0 , 0}$ &$\binom{4}{1, 1 , 2}$&$36$\\ \hline
(2,2,2,1) & $\binom{3}{1 , 1, 1 , 0}$ &$\binom{4}{3 , 1}$&$ 24$\\ \hline
\end{tabular}
\end{center}
\renewcommand{\arraystretch}{1}
The table below shows the number of saturated design with respect to the two margins.
\begin{center}
\begin{tabular}{ |c|c|c|c|}\hline
    Margins     & (4,1,1,1) & (3,2,1,1) & (2,2,2,1) \\  \hline
 (4,1,1,1)   & $16$ & $144$ & $96$ \\ \hline
  (3,2,1,1) & $144$ &  $1296$ & $864$ \\ \hline
  (2,2,2,1)  & $96$ & $864$ & $576$ \\
  \hline
\end{tabular}
\end{center}
Finally, Figure \ref{fig:tutte4} shows the saturated $4 \times 4$ designs identified by contingency tables. For each margin configuration, a representative of each equivalence class of tables is displayed. The equivalence is up to permutation of rows, permutation of columns, and transposition. We notice that in two cases there is more than one equivalence class.

\begin{figure}\label{fig:tutte4}
\begin{center}
\renewcommand{\tabcolsep}{3pt}
\begin{tiny}
\begin{tabular}{|c|ccc|c|} \hline
\begin{tabular}{c}
\multicolumn{1}{c}{ }\\
\begin{tabular}{|c|c|c|c|} \hline
$\bullet$ & $\bullet$ & $\bullet$ & $\bullet$ \\ \hline
$\bullet$ & & & \\ \hline
$\bullet$ & & & \\ \hline
$\bullet$ & & & \\ \hline
\end{tabular} \\
$(4,1,1,1) $ \\
$(4,1,1,1) $\\ \multicolumn{1}{c}{ }\\
\end{tabular}

&

&

\begin{tabular}{c}
\multicolumn{1}{c}{ }\\
\begin{tabular}{|c|c|c|c|} \hline
$\bullet$ & $\bullet$ & $\bullet$ & \\ \hline
$\bullet$ & & & $\bullet$ \\ \hline
$\bullet$ & & & \\ \hline
$\bullet$ & & & \\ \hline
\end{tabular} \\
$(4,1,1,1) $ \\
$(3,2,1,1) $\\ \multicolumn{1}{c}{ }\\
\end{tabular}

&

&

\begin{tabular}{c}
\multicolumn{1}{c}{ }\\
\begin{tabular}{|c|c|c|c|} \hline
$\bullet$ & $\bullet$ & & \\ \hline
$\bullet$ & & $\bullet$ &  \\ \hline
$\bullet$ & & & $\bullet$ \\ \hline
$\bullet$ & & & \\ \hline
\end{tabular} \\
$(4,1,1,1) $ \\
$(2,2,2,1) $\\ \multicolumn{1}{c}{ }\\
\end{tabular} \\ \hline

&

\begin{tabular}{c}
\multicolumn{1}{c}{ }\\
\begin{tabular}{|c|c|c|c|} \hline
$\bullet$ & $\bullet$ & $\bullet$ & \\ \hline
$\bullet$ & & & $\bullet$ \\ \hline
$\bullet$ & & & \\ \hline
& $\bullet$ & & \\ \hline
\end{tabular} \\
$(3,2,1,1) $ \\
$(3,2,2,1) $\\ \multicolumn{1}{c}{ }\\
\end{tabular}

&

\begin{tabular}{c}
\multicolumn{1}{c}{ }\\
\begin{tabular}{|c|c|c|c|} \hline
$\bullet$ & $\bullet$ & $\bullet$ & \\ \hline
& $\bullet$ & & $\bullet$ \\ \hline
$\bullet$ & & & \\ \hline
$\bullet$ & & & \\ \hline
\end{tabular} \\
$(3,2,1,1) $ \\
$(3,2,1,1) $\\ \multicolumn{1}{c}{ }\\
\end{tabular}

&

\begin{tabular}{c}
\multicolumn{1}{c}{ }\\
\begin{tabular}{|c|c|c|c|} \hline
& $\bullet$ & $\bullet$ & $\bullet$ \\ \hline
$\bullet$ & $\bullet$ & & \\ \hline
$\bullet$ & & & \\ \hline
$\bullet$ & & & \\ \hline
\end{tabular} \\
$(3,2,1,1) $ \\
$(3,2,1,1) $\\ \multicolumn{1}{c}{ }\\
\end{tabular}

&

\begin{tabular}{cc}
\begin{tabular}{c}
\multicolumn{1}{c}{ }\\
\begin{tabular}{|c|c|c|c|} \hline
$\bullet$ & $\bullet$ & & \\ \hline
$\bullet$ & & & $\bullet$ \\ \hline
$\bullet$ & & $\bullet$ & \\ \hline
& $\bullet$ & & \\ \hline
\end{tabular} \\
$(3,2,1,1) $ \\
$(2,2,2,1) $\\ \multicolumn{1}{c}{ }\\
\end{tabular} &

\begin{tabular}{c}
\multicolumn{1}{c}{ }\\
\begin{tabular}{|c|c|c|c|} \hline
$\bullet$ & & & $\bullet$ \\ \hline
$\bullet$ & $\bullet$ & & \\ \hline
& $\bullet$ & $\bullet$ & \\ \hline
$\bullet$ & & & \\ \hline
\end{tabular} \\
$(3,2,1,1) $ \\
$(2,2,2,1) $ \\ \multicolumn{1}{c}{ }
\end{tabular}\\
\end{tabular} \\ \hline

&

&

&

&

\begin{tabular}{c}
\multicolumn{1}{c}{ }\\
\begin{tabular}{|c|c|c|c|} \hline
$\bullet$ & $\bullet$ & & \\ \hline
$\bullet$ & & $\bullet$ & \\ \hline
& $\bullet$ & &$\bullet$ \\ \hline
& &$\bullet$ & \\ \hline
\end{tabular} \\
$(2,2,2,1) $ \\
$(2,2,2,1) $ \\ \multicolumn{1}{c}{ }\\
\end{tabular} \\
 \hline

\end{tabular}
\end{tiny}
\end{center}
\caption{Non-equivalent saturated $4 \times 4$ designs classified by the margins.}
\end{figure}
\renewcommand{\tabcolsep}{6pt}
\end{example}

\section{Markov bases for $I \times J$ designs} \label{mar-bas}

Proposition \ref{pr:k_OA} and Theorem \ref{th:teo} lead to an
interesting connection with the theory of Markov bases for this
kind of experimental design. In fact, we have already identified the two-factor design with a binary $I \times J$ contingency table. Under such representation, we define the matrix $N({\mathcal F})$, where $N({\mathcal F})_{i,j}=1$ if $(i,j)$ is a point of the fraction and $N({\mathcal F})_{i,j}=0$ otherwise. Using Algebraic statistics tools, we
are able to generate all fractions with given margins through a
Markov chain algorithm following the theory in
\cite{rapallo|rogantin:07}.

We briefly recall the basic facts about Markov bases. The reader can refer to the book \cite{drtonetal:09} for a complete presentation. A \emph{Markov move} $m$ is a table with integer entries such that $N({\mathcal F})$, $N({\mathcal F})+m$ and $N({\mathcal F})-m$ have the same margins. A \emph{Markov basis} ${\mathcal M}$ is a finite set of Markov moves which makes connected the set of all tables, or designs, with the same margins. Notice that by adding Markov moves to a Markov basis yields again a Markov basis.

If we start from a fraction with matrix $N({\mathcal F}_0)$, the Markov chain is
then built as follows:
\begin{itemize}
\item at each step $i$, randomly choose a Markov move $m$ in ${\mathcal M}$ and
a sign $\varepsilon \in \{ \pm 1 \}$;

\item if $N({\mathcal F}_i)+ \varepsilon m$ is a binary table,
move the chain to $N({\mathcal F}_{i+1}) =
N({\mathcal F}_i)+ \varepsilon m$; otherwise, stay in
$N({\mathcal F}_i)$.
\end{itemize}
The Markov chain described above is a connected chain over all the
designs with fixed margins, and its stationary distribution is the uniform one. By considering the classical Metropolis-Hastings probability ratio, one can define a Markov chain converging to any specified probability distribution, see \cite{diaconis|sturmfels:98}.

The theory in
\cite{rapallo|rogantin:07}, Sections 4 and 5, states
that the relevant Markov basis for our problem can be computed
from the complete bipartite graph of the design. The \emph{complete bipartite graph} has one vertex for each level of $A$,
one vertex for each level of $B$ and one edge connects each
$A$-vertex with each $B$-vertex. A circuit of degree $k$ is a closed path
with $2k$ vertices, and with edges
\begin{equation}  \label{seq-edges}
(i_1,j_1),(j_1,i_2),(i_2,j_2), \ldots,(i_k,j_k), (j_k,i_1) \, ,
\end{equation}
where $i_1, \ldots, i_k$ are distinct $A$-indices and $j_1,
\ldots, j_k$ are distinct $B$-indices. A concise presentation of
the theory of bipartite graphs can be found in
\cite{sturmfels:96}. The complete bipartite graph for the $3 \times 4$ designs is depicted in Figure \ref{graph}.

\begin{figure} \label{graph}
\begin{center}
\begin{picture}(160,140)(0,0)
\put(30,30){\circle*{4}} \put(3,25){$i=1$}

\put(30,60){\circle*{4}} \put(20,55){$2$}

\put(30,90){\circle*{4}} \put(20,85){$3$}

\put(120,15){\circle*{4}} \put(125,10){$j=1$}

\put(120,45){\circle*{4}} \put(125,40){$2$}

\put(120,75){\circle*{4}} \put(125,70){$3$}

\put(120,105){\circle*{4}} \put(125,100){$4$}

\put(30,30){\line(6,-1){90}} \put(30,30){\line(6,1){90}}
\put(30,30){\line(2,1){90}}  \put(30,30){\line(6,5){90}}

\put(30,60){\line(2,-1){90}} \put(30,60){\line(6,-1){90}}
\put(30,60){\line(6,1){90}} \put(30,60){\line(2,1){90}}

\put(30,90){\line(6,-5){90}} \put(30,90){\line(2,-1){90}}
\put(30,90){\line(6,-1){90}} \put(30,90){\line(6,1){90}}

\end{picture}
\caption{The complete bipartite graph for a $3 \times 4$ design.} \label{bip-circ}
\end{center}
\end{figure}

Then, the Markov basis for our problem is defined by associating a Markov move to each circuit of the complete
bipartite graph. Such move has entry $1$ for each edge in even position in the sequence \eqref{seq-edges}, and has entry $-1$ for each edge in odd position.

\begin{proposition}
The $k$-cycles, decomposed into two orthogonal arrays as in Proposition \ref{pr:k_OA}, form a Markov basis.
\end{proposition}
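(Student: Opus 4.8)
The plan is to obtain the statement as a direct consequence of the structural result of \cite{rapallo|rogantin:07} recalled above together with Proposition \ref{pr:k_OA}: essentially no new computation is needed, only a translation between the graph-theoretic description of the relevant Markov basis and the combinatorial description in terms of $k$-cycles. Recall that, by that theory, a Markov basis for the binary $I\times J$ tables with fixed row and column sums is obtained by attaching to every circuit of the complete bipartite graph the move carrying $+1$ on the edges in even position and $-1$ on the edges in odd position of the sequence \eqref{seq-edges}. It therefore suffices to show that each such circuit move coincides with the move attached to some $k$-cycle via the decomposition of Proposition \ref{pr:k_OA}, and, conversely, that every move attached to a $k$-cycle is itself a Markov move; the conclusion then follows from the remark quoted earlier that enlarging a Markov basis by further Markov moves still leaves a Markov basis.

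The first step is to make the identification explicit. Fix a circuit of degree $k$ with edge sequence $(i_1,j_1),(j_1,i_2),(i_2,j_2),\dots,(i_k,j_k),(j_k,i_1)$. Reading the $2k$ edges as cells of the table, the odd-position edges are the cells $(i_1,j_1),(i_2,j_2),\dots,(i_k,j_k)$ and the even-position edges are the cells $(i_2,j_1),(i_3,j_2),\dots,(i_1,j_k)$. Since $i_1,\dots,i_k$ are distinct and $j_1,\dots,j_k$ are distinct, each of these two sets of $k$ cells meets every one of the $k$ selected levels of $A$ and every one of the $k$ selected levels of $B$ exactly once, hence is an $OA(k;(k,k);1)$; their union meets every selected level of each factor exactly twice, hence is a $k$-cycle, and it is indecomposable since a circuit visits all $2k$ of its vertices along a single closed path. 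Thus the circuit move is precisely $\mathbf 1_{OA_1}-\mathbf 1_{OA_2}$, where $OA_1,OA_2$ are the two orthogonal arrays associated to this $k$-cycle by Proposition \ref{pr:k_OA}.

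The second step is the easy converse direction at the level of moves. For any $k$-cycle decomposed as $OA_1\cup OA_2$, the table $\mathbf 1_{OA_1}-\mathbf 1_{OA_2}$ has, in each of the $k$ selected rows, exactly one entry $+1$ and one entry $-1$, and likewise in each selected column; hence it leaves all row and column sums unchanged and is a Markov move. (When the $k$-cycle is decomposable, this move is merely the sum of the circuit moves of its indecomposable sub-cycles, so it contributes nothing essential.) Combining the two steps, the family of moves coming from $k$-cycles contains all circuit moves, which already form a Markov basis, and every member of the family is a genuine Markov move; therefore, by the enlargement remark, the $k$-cycles form a Markov basis.

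I do not expect a genuine obstacle. The only point requiring care is the bookkeeping matching the alternating $\pm1$ labelling of a circuit with the two-orthogonal-array splitting of Proposition \ref{pr:k_OA} — in particular, that the distinctness of the $A$- and $B$-indices along a circuit is exactly what forces each of the two halves to be an $OA(k;(k,k);1)$ — together with the observation that decomposable $k$-cycles yield only redundant moves, so that phrasing the result for all $k$-cycles rather than only for the indecomposable ones costs nothing.
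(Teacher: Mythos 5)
Your proof is correct and follows essentially the same route as the paper, whose proof simply observes that each circuit of degree $k$ of the complete bipartite graph naturally defines a $k$-cycle decomposed into two orthogonal arrays as in Proposition \ref{pr:k_OA}. You merely make explicit the bookkeeping the paper leaves implicit: the alternating $\pm 1$ labelling of a circuit matches the $OA_1/OA_2$ splitting, moves from decomposable $k$-cycles still preserve the margins, and enlarging a Markov basis by further Markov moves again gives a Markov basis.
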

\begin{proof}
It is enough to observe that each circuit of degree $k$ naturally defines a $k$-cycle decomposed as in Proposition \ref{pr:k_OA}.
\end{proof}

\begin{example}
We now discuss the $3 \times 4$ case extensively. The complete bipartite graph in Figure \ref{bip-circ} has $12$
circuits with $4$ edges and $12$ circuits with $6$ edges. Thus, the Markov basis is formed by:
\begin{itemize}
\item $18$ moves of the form
\begin{equation} \label{move:1}
m=\left( \begin{matrix}
1 & -1 & 0 & 0 \\
-1 & 1 & 0 & 0 \\
0 & 0 & 0 & 0 \end{matrix}
   \right)
\end{equation}
corresponding to the $2$-cycles;
\item $24$ moves of the form
\begin{equation} \label{move:2}
m=\left( \begin{matrix}
1 & -1 & 0 & 0 \\
-1 & 0 & 1 & 0 \\
0 & 1 & -1 & 0  \end{matrix}
   \right)
\end{equation}
corresponding to the $3$-cycles.
\end{itemize}
The moves in Equations \eqref{move:1} and \eqref{move:2} are derived from the circuits $(1,1),(1,2),(2,2),(2,1)$ and $(1,1),(1,2),(2,3),(3,3),(3,2),(2,1)$, respectively.

For instance, starting from the fraction with table
\begin{equation*}
N({\mathcal F})=  \left( \begin{matrix}
1 & 1 & 1 & 1 \\
1 & 0 & 0 & 0 \\
1 & 0 & 0 & 0  \end{matrix}
   \right)
   \end{equation*}
we see that no move produces a valid binary table. Thus, we conclude
that $N({\mathcal F)}$ itself is the unique fraction with such margins. On the other hand, starting from the fraction with table
\begin{equation*}
N({\mathcal F})=  \left( \begin{matrix}
1 & 1 & 1 & 0 \\
1 & 0 & 0 & 0 \\
1 & 0 & 0 & 1  \end{matrix}
   \right)
   \end{equation*}
with the Markov moves, we can reach the other two tables
\begin{equation*}
 \left( \begin{matrix}
1 & 0 & 1 & 1 \\
1 & 0 & 0 & 0 \\
1 & 1 & 0 & 0  \end{matrix}
   \right) \ \qquad \
    \left( \begin{matrix}
1 & 1 & 0 & 1 \\
1 & 0 & 0 & 0 \\
1 & 0 & 1 & 0  \end{matrix}
   \right) \, .
\end{equation*}
Notice that a Markov basis preserve the margins, but it generates all the fractions, no matter if they are saturated or not. For example, the fraction
\begin{equation*}
N({\mathcal F})=  \left( \begin{matrix}
1 & 1 & 0 & 1 \\
1 & 0 & 1 & 0 \\
0 & 1 & 0 & 0  \end{matrix} \right)
\end{equation*}
represents a saturated design, while
\begin{equation*}
N({\mathcal F}) +  \left( \begin{matrix}
0 & 0 & 0 & 0 \\
0 & 1 & -1 & 0 \\
0 & -1 & 1 & 0  \end{matrix} \right)
\end{equation*}
does not.
\end{example}

The number of $k$-cycles increases fast with $I$ and $J$, and the computations become unfeasible for large $I$ or $J$. However, with some additional hypotheses on the fractions, for example assuming strict positivity of all margins, the connectedness of the Markov chain is ensured by a subset of circuits. That computational issues are studied in \cite{hara|takemura:10} and \cite{rapallo|yoshida:10}.

The results above yield another method for enumerating the saturated fractions. In fact, from Theorem \ref{th:teo}, a saturated fraction must have size equal to $I+J-1$ and it must not contain cycles. For those readers familiar with the language of graph theory, this implies that, in terms of the complete bipartite graph, the subgraph corresponding to the fraction must be a spanning tree. A known result in graph theory, named as the Cayley's formula, says that the number of spanning trees in a complete bipartite graph is $I^{J-1}J^{I-1}$. This represents an alternative proof of theorem \ref{th:n_SD}. For details on complete bipartite graphs and the Cayley's formula see \cite{rosen|michaels:00}.

\section{Conclusions} \label{fut-dir}

The theory described in this paper suggests several extensions and applications. First, it is interesting to explore how the results can be extended for the characterization of saturated fractions to more general designs. In the multi-way setting the generalization of $k$-cycles is not trivial and more advanced notions of Algebraic statistics must be used. Some preliminary examples are being explored by the authors and the findings seem fruitful. Second, the connections between fractions and graphs, as suggested in the last paragraph of Section \ref{mar-bas}, can lead to new useful results. Furthermore, it would be interesting to study the classification of the saturated fractions with respect to some statistical criteria. Among these criteria, we cite the minimum aberration in a classical sense, or more recent tools, such as state polytopes. Minimum aberration is a classical notion in this framework, and is supported by a large amount of literature. This theory has been developed in \cite{fries|hunter:80} and more recently in, e.g., \cite{ye:03} with the use of the indicator function in the two-level case. The extension to the multilevel case is currently an open problem. The use of state polytopes has been introduced in \cite{bersteinetal:10}. From the point of view of applications, the use of the inequivalent saturated fractions to perform exact tests on model parameters is worth studying, together with its implementation in statistical softwares, such as SAS or R. For inequivalent orthogonal arrays very interesting results have already been achieved, see \cite{basso} and \cite{arbfonrag}.

\bibliographystyle{spmpsci}
\bibliography{refsfrr}

{\bf Acknowledgment.} The authors would like to thank Professor Bernd Sturmfels (U.C. Berkeley) for his useful suggestions. RF acknowledges SAS institute for providing software. FR is partially supported by the PRIN2009 grant number 2009H8WPX5.

\end{document}